\tikzset{
  LabelStyle/.style = {font = \tiny\bfseries },
  VertexStyle/.append style = { inner sep=5pt,
                                font = \tiny\bfseries},
  EdgeStyle/.append style = {->} }
\newcommand{\overundercup}[2]{{{\underset{#1}{\overset{#2}{\fontsize{5pt}{6pt}\selectfont\bigcup}}}}}
\newtheorem{theorem}{Theorem}[section]
\newtheorem*{theorem A}{Theorem A}
\newtheorem*{theorem B}{N\"olker's Theorem}
\theoremstyle{remark}
\theoremstyle{remark}
\newtheorem{claim}{Claim}
\theoremstyle{definition}
\begin{document}
\begin{frontmatter}
\papertitle{A Cartesian graph-decomposition theorem based on a vertex-removing synchronised graph product}



\author[label2]{Antoon H. Boode}

\address[label2]{\small {Robotics Research Group, \\InHolland University of Applied Science\\ The Netherlands}

\vspace*{2.5ex} 
 \normalfont ton.boode@inholland.nl}

\begin{abstract}
\noindent
Recently, we have introduced and modified two graph-decomposition theorems based on a new graph product, motivated by applications in the context of synchronising periodic real-time processes. 
This vertex-removing synchronised product (VRSP), is based on modifications of the well-known Cartesian product and is closely related to the synchronised product due to W\"ohrle and Thomas. 
Here, we introduce a new graph-decomposition theorem based on the VRSP that provides a Cartesian decomposition of graphs.

\end{abstract}

\begin{keyword}
Vertex-Removing Synchronised Graph Product 
\sep Product Graph
\sep Graph Decomposition
\sep Synchronising Processes

Mathematics Subject Classification : 05C76, 05C51, 05C20, 94C15

DOI: 

\end{keyword}

\end{frontmatter}

\section{Introduction}\label{sec:intro}
Recently, we have introduced~\cite{dam} and modified~\cite{ejgta1} two graph-decomposition theorems based on a new graph product, motivated by applications in the context of synchronising periodic real-time processes, in particular in the field of robotics. 
More on the background, definitions and applications can be found in two conference contributions \cite{boode2014cpa, boode2013cpa}, three journal papers~\cite{ejgta1,dam,ejgta}, on ArXiv~\cite{mod-arxiv} and the thesis of the author~\cite{boodethesis}. 
We repeat some of the background, definitions and theorems here for convenience, and present the decomposition theorem that we state and prove in Section~\ref{sec:decomp}. 

The decomposition of graphs is well known in the literature.
For example, a decomposition can be based on the partition of a graph into edge disjoint subgraphs. 
In our case, the decomposition is based on the contraction of a subset of the vertices of the graph, in such a manner that if $V'\subset V(G)$ is contracted giving $G'$ and $V''\subset V(G)$ is contracted giving $G''$ we have that the vertex-removing synchronised product (VRSP) of $G'$ and $G''$ is isomorphic to $G$.
In this contribution, we have a series of contractions of $G$ where each contraction replaces a set of vertices by one vertex (and the deletion of the arcs with both ends in the contracted set) giving a graph $G'$ and another series of contractions where each contraction replaces a set of vertices by one vertex (and the deletion of the arcs with both ends in the contracted set) giving a graph $G''$ such that $G$ is isomorphic to the VRSP of $G'$ and $G''$.

The rest of the paper is organised as follows.
In the next two sections, we first recall the formal graph definitions (in Section~\ref{sec:term}).
Then, we state and prove (in Section~\ref{sec:decomp}) the decomposition theorem.

\section{Terminology and notation}\label{sec:term}
In order to avoid duplication we refer the interested reader to \cite{dam} or \cite{mod-arxiv} for background, definitions and more details.
Furthermore, we use the textbook of Bondy and Murty~\cite{GraphTheory} for terminology and notation we have not specified here, or in \cite{dam} or in \cite{mod-arxiv}. 
For convenience, we repeat a few definitions that are especially important for this note.

Let $G$ be an edge-labelled acyclic directed multigraph with a vertex set $V$, an arc set $A$, a set of label pairs $L$ and two mappings.
The first mapping $\mu: A\rightarrow V\times V$ is an incidence function that identifies the {\em tail\/} and {\em head\/} of each arc $a\in A$. 
In particular, $\mu(a)=(u,v)$ means that the arc $a$ is directed from $u\in V$ to $v\in V$, where $tail(a)=u$ and $head(a)=v$. We also call $u$ and $v$ the {\em ends\/} of $a$. 
The second mapping $\lambda :A\rightarrow L$ assigns a label pair $\lambda(a)=(\ell(a),t(a))$ to each arc $a\in A$, where $\ell(a)$ is a string representing the (name of an) action and $t(a)$ is the {\em weight\/} of the arc $a$.

Let $X$ be a nonempty proper subset of $V(G)$. 
By {\em contracting $X$\/} we mean replacing $X$ by a new vertex $\tilde{x}$, deleting all arcs with both ends in $X$, replacing each arc $a\in A(G)$ with $\mu(a)=(u,v)$ for $u\in X$ and $v\in V(G)\backslash X$ by an arc $c$ with $\mu(c)=(\tilde{x},v)$ and $\lambda (c)=\lambda(a)$, and replacing each arc $b\in A(G)$ with $\mu(b)=(u,v)$ for $u\in V(G)\backslash X$ and $v\in X$ by an arc $d$ with $\mu(d)=(u,\tilde{x})$ and $\lambda (d)=\lambda(b)$. We denote the resulting graph as $G/X$, and say that $G/X$ is the {\em contraction of $G$ with respect to $X$\/}. 



Informally, the vertex-removing synchronised product (VRSP), denoted by $\boxbackslash$, starts from the well-known Cartesian product, denoted by $\Box$, and is based on a reduction of the number of arcs and vertices due to the presence of synchronising arcs, i.e., arcs with the same label. This reduction is done in two steps: in the first step synchronising pairs of arcs from $G_1$ and $G_2$ are replaced by one (diagonal) arc, giving the intermediate product denoted by $\boxtimes$; in the second step, vertices (and the arcs with that vertex as a tail) are removed one by one if they have $level > 0$ in the Cartesian product but $level=0$ in what is left of the intermediate product.

An arc $a\in A(G_i)$ with label pair $\lambda(a)$ is a \emph{synchronising arc} with respect to $G_j$, if and only if there exists an arc $b\in A(G_j)$ with label pair $\lambda(b)$ such that $\lambda(a)=\lambda(b)$.
Furthermore, an arc $a$ with label pair $\lambda(a)$ of $G_i\boxtimes G_j$ or $G_i\boxbackslash G_j$ is a \emph{synchronous} arc, whenever there exist a pair of arcs $a_i\in A(G_i)$ and $a_j\in A(G_j)$ with $\lambda(a)=\lambda(a_i)=\lambda(a_j)$.
Analogously, an arc $a$ with label pair $\lambda(a)$ of $G_i\boxtimes G_j$ or $G_i\boxbackslash G_j$ is an \emph{asynchronous} arc, whenever $\lambda(a)\notin L_i$ or $\lambda(a)\notin L_j$.

But, for this contribution, we consider the Cartesian part of the VRSP only.
This is due to the fact that the decomposed graphs do not contain synchronising arcs with respect to each other and therefore the VRSP is an analogue of the Cartesian product.

\section{The Cartesian graph-decomposition theorem.}\label{sec:decomp}
We assume that the graphs we want to decompose are connected; if not, we can apply our decomposition results to the components separately. 
We continue with an analogue of a result by Sabidussi, where it is stated that \enquote{every connected graph of \emph{finite type} has a unique prime factor decomposition with respect to Cartesian multiplication~\cite{Sabidussi}}.
In our case, we have a special case of Sabidussi's connected graph, because our graphs are directed, acyclic and have labelled arcs. 
For the Cartesian multiplication, it is sufficient if the graphs used for the multiplication do not share a label.

In Figure~\ref{CartDecomposition}, we give a simple example of this decomposition.
On the left of Figure~\ref{CartDecomposition}, we have the graph $G$ consisting of subgraphs $G'_1,\ldots, G'_3,G''_1,\ldots,G''_4$.
The vertex set of $G'_i$ is replaced by $\tilde{y}_i$ giving the graph  $G/V(G'_1)/\ldots/V(G'_3)$ (shown in the middle of Figure~\ref{CartDecomposition}) and the vertex set of $G''_j$ is replaced by $\tilde{x}_j$ giving the graph  $G/V(G''_1)/\ldots/V(G''_4)$ (shown at the upper left of Figure~\ref{CartDecomposition}).
The VRSP of $G/V(G'_1)/\ldots/V(G'_3)$ and $G/V(G''_1)/\ldots/V(G''_4)$ then gives the graph on the left of Figure~\ref{CartDecomposition}, $G\cong G/V(G'_1)/\ldots/V(G'_3)\boxbackslash G/V(G''_1)/\ldots/V(G''_4)$.
Because there are no synchronising arcs in $G/V(G'_1)/\ldots/V(G'_3)$ with respect to $G/V(G''_1)/\ldots/V(G''_4)$, we have that the Cartesian product, the intermediate stage and the VRSP of these graphs are the same. Therefore, $G\cong G/V(G'_1)/\ldots/V(G'_3)\Box G/V(G''_1)/\ldots/V(G''_4)= G/V(G'_1)/\ldots/V(G'_3)\boxtimes$ $G/V(G''_1)/\ldots/V(G''_4)= G/V(G'_1)/\ldots/V(G'_3)\boxbackslash G/V(G''_1)/\ldots/V(G''_4)$.
\begin{figure}[H]
\begin{center}
\resizebox{1.0\textwidth}{!}{
\begin{tikzpicture}[-{Stealth[length=5mm, width=4mm]},auto,node distance=2.5cm,
  main node/.style={circle,fill=blue!10,draw, font=\sffamily\Large\bfseries}]
  \tikzset{VertexStyle/.append style={
  font=\itshape\large, shape = circle,inner sep = 2pt, outer sep = 0pt,minimum size = 20 pt,draw}}
  \tikzset{LabelStyle/.append style={font = \itshape}}
  \SetVertexMath
  \def\x{0.0}
  \def\y{1.0}
  \def\dx{-17.0}
  \def\dy{-13.0}
\node at (\x+0.5+\dx,3+\y+3+\dy) {$G$};
\node at (\x+4.5+\dx,\y+5.4+\dy) {$G'_1$};
\node at (\x+1.5+\dx,\y+4.4+\dy) {$G''_1$};
\node at (\x+4.5+\dx,\y+2.6+\dy) {$G'_2$};
\node at (\x+5.5+\dx,\y+4.4+\dy) {$G''_2$};
\node at (\x+4.5+\dx,\y-0.4+\dy) {$G'_3$};
\node at (\x+9.5+\dx,\y+4.4+\dy) {$G''_3$};
\node at (\x+13.5+\dx,\y+4.4+\dy){$G''_4$};
\node at (\x+5.5+4,\y-1-2) {$G/V(G''_1)/V(G''_2)/V(G''_3)/V(G''_4)$};
\node at (\x+0.5+1,\y-5-1) {$G/V(G'_1)/V(G'_2)/V(G'_3)$};
\node at (\x+7.5+2.5,\y-3-2-1) {$G/V(G'_1)/V(G'_2)/V(G'_3)\boxbackslash G/V(G''_1)/V(G''_2)/V(G''_3)/V(G''_4)$};
  \def\x{0.5+\dx}
  \def\y{6.0+1+\dy}
  \Vertex[x=\x+1.5, y=\y+0.0,L={u_1}]{u_2}
  \Vertex[x=\x+1.5, y=\y-3.0,L={u_2}]{u_3}
  \Vertex[x=\x+1.5, y=\y-6.0,L={u_3}]{u_4}
  \Vertex[x=\x+5.5, y=\y+0.0,L={u_4}]{u_5}
  \Vertex[x=\x+5.5, y=\y-3,L={u_5}]{u_6}
  \Vertex[x=\x+5.5, y=\y-6,L={u_6}]{u_7}
  \Vertex[x=\x+9.5, y=\y+0.0,L={u_7}]{u_8}
  \Vertex[x=\x+9.5, y=\y-3.0,L={u_8}]{u_9}
  \Vertex[x=\x+9.5, y=\y-6.0,L={u_{9}}]{u_10}
  \Vertex[x=\x+13.5, y=\y+0,L={u_{10}}]{u_11}
  \Vertex[x=\x+13.5, y=\y-3,L={u_{11}}]{u_12}
  \Vertex[x=\x+13.5, y=\y-6,L={u_{12}}]{u_13}

  \Edge[label = a](u_2)(u_3) 
  \Edge[label = a](u_3)(u_4) 
  \Edge[label = a](u_5)(u_6) 
  \Edge[label = a](u_6)(u_7) 
  \Edge[label = a](u_8)(u_9) 
  \Edge[label = a](u_9)(u_10) 
  \Edge[label = a](u_11)(u_12) 
  \Edge[label = a](u_12)(u_13) 
  \Edge[label = b](u_2)(u_5) 
  \Edge[label = b](u_5)(u_8) 
  \Edge[label = c](u_8)(u_11) 
  \Edge[label = b](u_3)(u_6) 
  \Edge[label = b](u_6)(u_9) 
  \Edge[label = c](u_9)(u_12) 
  \Edge[label = b](u_4)(u_7) 
  \Edge[label = b](u_7)(u_10) 
  \Edge[label = c](u_10)(u_13) 
  
  \Edge(u_2)(u_3) 
  \Edge(u_3)(u_4) 
  \Edge(u_5)(u_6) 
  \Edge(u_6)(u_7) 
  \Edge(u_8)(u_9) 
  \Edge(u_9)(u_10) 
  \Edge(u_11)(u_12) 
  \Edge(u_12)(u_13) 
  \Edge(u_2)(u_5) 
  \Edge(u_5)(u_8) 
  \Edge(u_8)(u_11) 
  \Edge(u_3)(u_6) 
  \Edge(u_6)(u_9) 
  \Edge(u_9)(u_12) 
  \Edge(u_4)(u_7) 
  \Edge(u_7)(u_10) 
  \Edge(u_10)(u_13) 
  
  \def\x{4.0}
  \def\y{-3}
  \Vertex[x=\x+1, y=\y+0.0,L={\tilde{x_1}}]{s_1}
  \Vertex[x=\x+4, y=\y+0.0,L={\tilde{x_2}}]{s_2}
  \Vertex[x=\x+7, y=\y+0.0,L={\tilde{x_3}}]{s_3}
  \Vertex[x=\x+10, y=\y+0.0,L={\tilde{x_4}}]{s_4}
  
  \Edge[label = b](s_1)(s_2) 
  \Edge[label = b](s_2)(s_3) 
  \Edge[label = c](s_3)(s_4) 
  \Edge(s_1)(s_2) 
  \Edge(s_2)(s_3) 
  \Edge(s_3)(s_4) 

  \def\x{+1.5}
  \def\y{-3.0}
  \Vertex[x=\x+0, y=\y-3.0,L={\tilde{y}_1}]{t_1}
  \Vertex[x=\x+0, y=\y-6.0,L={\tilde{y}_2}]{t_2}
  \Vertex[x=\x+0, y=\y-9.0,L={\tilde{y}_3}]{t_3}

  \Edge[label = a](t_1)(t_2) 
  \Edge[label = a](t_2)(t_3) 
  \Edge(t_1)(t_2) 
  \Edge(t_2)(t_3)

\tikzset{VertexStyle/.append style={
  font=\itshape\large,shape = rounded rectangle,inner sep = 0pt, outer sep = 0pt,minimum size = 20 pt,draw}}

  \def\x{2.0}
  \def\y{-6.0}
  \Vertex[x=\x+3.0, y=\y-0.0,L={(\tilde{y}_1,\tilde{x}_1)}]{t_1s_1}
  \Vertex[x=\x+6.0, y=\y-0.0,L={(\tilde{y}_1,\tilde{x}_2)}]{t_1s_2}
  \Vertex[x=\x+9.0, y=\y-0.0,L={(\tilde{y}_1,\tilde{x}_3)}]{t_1s_3}
  \Vertex[x=\x+12.0, y=\y-0.0,L={(\tilde{y}_1,\tilde{x}_4)}]{t_1s_4}
  \def\x{2.0}
  \def\y{-9.0}
  \Vertex[x=\x+3.0, y=\y-0.0,L={(\tilde{y}_2,\tilde{x}_1)}]{t_2s_1}
  \Vertex[x=\x+6.0, y=\y-0.0,L={(\tilde{y}_2,\tilde{x}_2)}]{t_2s_2}
  \Vertex[x=\x+9.0, y=\y-0.0,L={(\tilde{y}_2,\tilde{x}_3)}]{t_2s_3}
  \Vertex[x=\x+12.0, y=\y-0.0,L={(\tilde{y}_2,\tilde{x}_4)}]{t_2s_4}
  \def\x{2.0}
  \def\y{-12.0}
  \Vertex[x=\x+3.0, y=\y-0.0,L={(\tilde{y}_3,\tilde{x}_1)}]{t_3s_1}
  \Vertex[x=\x+6.0, y=\y-0.0,L={(\tilde{y}_3,\tilde{x}_2)}]{t_3s_2}
  \Vertex[x=\x+9.0, y=\y-0.0,L={(\tilde{y}_3,\tilde{x}_3)}]{t_3s_3}
  \Vertex[x=\x+12.0, y=\y-0.0,L={(\tilde{y}_3,\tilde{x}_4)}]{t_3s_4}

  \Edge[label = b](t_1s_1)(t_1s_2) 
  \Edge[label = b](t_1s_2)(t_1s_3) 
  \Edge[label = c](t_1s_3)(t_1s_4) 
  \Edge(t_1s_1)(t_1s_2) 
  \Edge(t_1s_2)(t_1s_3) 
  \Edge(t_1s_3)(t_1s_4) 

  \Edge[label = b](t_2s_1)(t_2s_2) 
  \Edge[label = b](t_2s_2)(t_2s_3) 
  \Edge[label = c](t_2s_3)(t_2s_4) 
  \Edge(t_2s_1)(t_2s_2) 
  \Edge(t_2s_2)(t_2s_3) 
  \Edge(t_2s_3)(t_2s_4) 

  \Edge[label = b](t_3s_1)(t_3s_2) 
  \Edge[label = b](t_3s_2)(t_3s_3) 
  \Edge[label = c](t_3s_3)(t_3s_4) 
  \Edge(t_3s_1)(t_3s_2) 
  \Edge(t_3s_2)(t_3s_3) 
  \Edge(t_3s_3)(t_3s_4)

  \Edge[label = a](t_1s_1)(t_2s_1) 
  \Edge[label = a](t_1s_2)(t_2s_2) 
  \Edge[label = a](t_1s_3)(t_2s_3)
  \Edge[label = a](t_1s_4)(t_2s_4)  
  \Edge(t_1s_1)(t_2s_1) 
  \Edge(t_1s_2)(t_2s_2) 
  \Edge(t_1s_3)(t_2s_3)
  \Edge(t_1s_4)(t_2s_4)  
 
  \Edge[label = a](t_2s_1)(t_3s_1) 
  \Edge[label = a](t_2s_2)(t_3s_2) 
  \Edge[label = a](t_2s_3)(t_3s_3)
  \Edge[label = a](t_2s_4)(t_3s_4)  
  \Edge(t_2s_1)(t_3s_1) 
  \Edge(t_2s_2)(t_3s_2) 
  \Edge(t_2s_3)(t_3s_3)
  \Edge(t_2s_4)(t_3s_4)

  \def\x{1.7+\dx}
  \def\y{5.9+1+\dy}
\draw[circle, -,dashed, very thick,rounded corners=18pt] (\x-0.5,\y+0.0)--(\x-0.5,\y+0.7) --(\x+1.0,\y+0.7) -- (\x+1.0,\y-6.6) -- (\x-0.5,\y-6.6) --  (\x-0.5,\y+0.0);
  \def\x{5.7+\dx}
  \def\y{5.9+1+\dy}
\draw[circle, -,dashed, very thick,rounded corners=18pt] (\x-0.5,\y+0.0)--(\x-0.5,\y+0.7) --(\x+1.0,\y+0.7) -- (\x+1.0,\y-6.6) -- (\x-0.5,\y-6.6) --  (\x-0.5,\y+0.0);
  \def\x{9.7+\dx}
  \def\y{5.9+1+\dy}
\draw[circle, -,dashed, very thick,rounded corners=18pt] (\x-0.5,\y+0.0)--(\x-0.5,\y+0.7) --(\x+1.0,\y+0.7) -- (\x+1.0,\y-6.6) -- (\x-0.5,\y-6.6) --  (\x-0.5,\y+0.0);
  \def\x{13.7+\dx}
  \def\y{5.9+1+\dy}
\draw[circle, -,dashed, very thick,rounded corners=18pt] (\x-0.5,\y+0.0)--(\x-0.5,\y+0.7) --(\x+1.0,\y+0.7) -- (\x+1.0,\y-6.6) -- (\x-0.5,\y-6.6) --  (\x-0.5,\y+0.0);

  \def\x{1.7+\dx}
  \def\y{5.1+1+\dy}
\draw[circle, -,dashed, very thick,rounded corners=18pt] (\x+2.8,\y+0.0)--(\x-0.5,\y+0.0)--(\x-0.5,\y+1.5)--(\x+13.0,\y+1.5) --(\x+13.0,\y) -- (\x+2.8,\y+0.0);

  \def\x{1.7+\dx}
  \def\y{2.3+1+\dy}
\draw[circle, -,dashed, very thick,rounded corners=18pt] (\x+2.8,\y+0.0)--(\x-0.5,\y+0.0)--(\x-0.5,\y+1.5)--(\x+13.0,\y+1.5) --(\x+13.0,\y) -- (\x+2.8,\y+0.0);

  \def\x{1.7+\dx}
  \def\y{-0.7+1+\dy}
\draw[circle, -,dashed, very thick,rounded corners=18pt] (\x+2.8,\y+0.0)--(\x-0.5,\y+0.0)--(\x-0.5,\y+1.5)--(\x+13.0,\y+1.5) --(\x+13.0,\y) -- (\x+2.8,\y+0.0);

\end{tikzpicture}
}
\end{center}
\caption{Decomposition of $G$ giving $G\cong G/V(G'_1)/V(G'_2)/V(G'_3)\boxbackslash G/V(G''_1)/V(G''_2)/V(G''_3)/V(G''_4)$.}
  \label{CartDecomposition}
\end{figure}
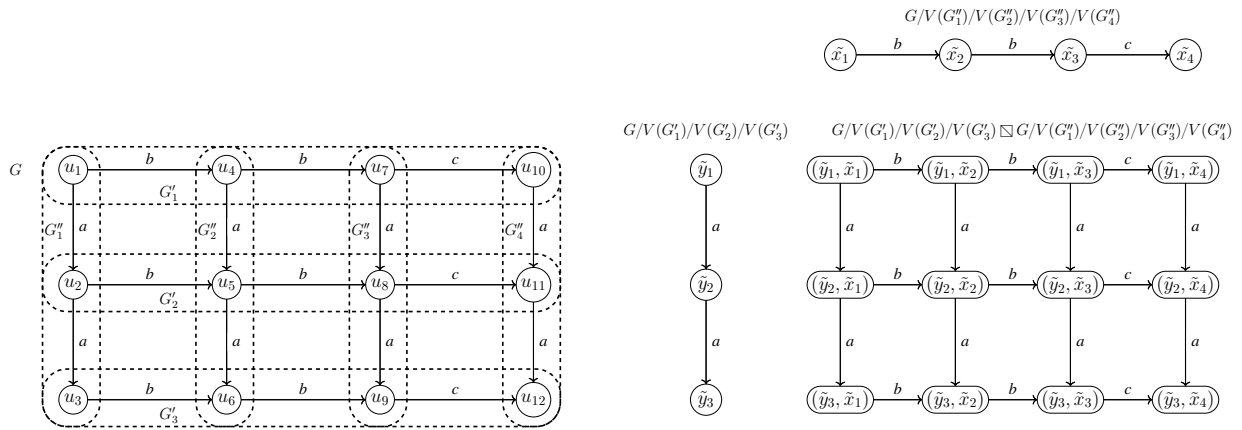

In general, we have the (repeated) contraction of a vertex set of a graph $G$ into the graphs $G'_i$, $G''_j$ such that $G'_i\boxbackslash G''_j\cong G$.
Then, the requirements for the (repeated) contraction of a vertex set of the graph $G$ are as follows: $G=\overundercup{i=1}{m}G'_i\cup \overundercup{j=1}{n}G''_j$, $G'_i\cong G'_j$, $G''_i\cong G''_j$, $G/V(G'_1)/\ldots/V(G'_m)\cong G''_i$, $G/V(G''_1)/\ldots/V(G''_n)\cong G'_i$, $V(G)=\overundercup{i=1}{m}V(G'_i)$, $V(G)=\overundercup{j=1}{n}V(G''_j)$,  $A(G)=\overundercup{j=1}{m}A(G'_i)\cup \overundercup{j=1}{n}A(G''_j)$, $L(G)=L(G'_i)\cup L(G''_j)$ and $L(G'_i)\cap L(G''_j)=\emptyset$.

This leads to theorem~\ref{theorem1}.
Note that the line of proof is similar to the line of proof of our contribution in~\cite{dam}.
\begin{theorem}\label{theorem1}
Let $G$ be a graph. If $G=\overundercup{i=1}{m}G'_i\cup \overundercup{j=1}{n}G''_j$, $G'_i\cong G'_j$, $G''_i\cong G''_j$, $G/V(G'_1)/\ldots/V(G'_m)$ $\cong G''_i$, $G/V(G''_1)/\ldots/V(G''_n)\cong G'_i$, $V(G)=\overundercup{i=1}{m}V(G'_i)=\overundercup{j=1}{n}V(G''_j)$, and $L(G'_i)\cap L(G''_j)=\emptyset$.
Then $G\cong G/V(G'_1)/\ldots/V(G'_m)\boxbackslash G/V(G''_1)/\ldots/V(G''_n)$.
\end{theorem}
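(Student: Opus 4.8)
The plan is to reduce the statement to the ordinary Cartesian product and then exhibit an explicit isomorphism. Write $P := G/V(G'_1)/\cdots/V(G'_m)$ and $Q := G/V(G''_1)/\cdots/V(G''_n)$, so that the goal is $G\cong P\boxbackslash Q$. Reading the hypothesis $V(G)=\bigcup_{i}V(G'_i)=\bigcup_{j}V(G''_j)$ as saying that each of the two families partitions $V(G)$, the iterated contractions are order-independent and have vertex sets $\{\tilde y_1,\dots,\tilde y_m\}$ and $\{\tilde x_1,\dots,\tilde x_n\}$ respectively, where $\tilde y_i$ is the contraction of $V(G'_i)$ and $\tilde x_j$ that of $V(G''_j)$. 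Now every arc of $P$ is the image of an arc of $G$ whose two ends lie in distinct classes $V(G'_i)$; such an arc cannot lie in any $A(G'_i)$, so by $A(G)=\bigcup_i A(G'_i)\cup\bigcup_j A(G''_j)$ it lies in some $A(G''_j)$ and therefore carries a label from $L(G''_j)$. Hence the label set of $P$ is contained in $L(G''_1)=\cdots=L(G''_n)$, and symmetrically the label set of $Q$ is contained in $L(G'_1)$; since $L(G'_i)\cap L(G''_j)=\emptyset$, the graphs $P$ and $Q$ share no label and so have no synchronising arcs with respect to each other. As noted in the discussion above, in this situation the Cartesian product, the intermediate product and the VRSP coincide, $P\boxbackslash Q=P\boxtimes Q=P\Box Q$, so it remains to prove $G\cong P\Box Q$.

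From $G'_i\cong Q$ we get $|V(G'_i)|=|V(Q)|=n$ for every $i$, and because the $V(G'_i)$ partition $V(G)$ this forces $|V(G)|=mn$; symmetrically $|V(G''_j)|=m$. For $v\in V(G)$ let $i(v)$ and $j(v)$ be the unique indices with $v\in V(G'_{i(v)})\cap V(G''_{j(v)})$. The technical core of the proof is the \emph{grid property}: $|V(G'_i)\cap V(G''_j)|=1$ for all $i,j$; equivalently, for each $i$ the natural projection $\pi_i\colon V(G'_i)\to V(Q)$, $v\mapsto\tilde x_{j(v)}$, is a bijection, and symmetrically each $\rho_j\colon V(G''_j)\to V(P)$, $v\mapsto\tilde y_{i(v)}$, is a bijection. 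I would obtain this from the two contraction-isomorphism hypotheses: an arc of $G'_i$ with both ends in a single class $V(G''_j)$ is deleted when $Q$ is formed, while every other arc of $G'_i$ descends to an arc of $Q$ carrying the same label; combining this with $G'_i\cong Q$ (so $|A(G'_i)|=|A(Q)|$ and $|V(G'_i)|=|V(Q)|$), with the analogous statement for $\rho_j$, and with connectedness and acyclicity of $G$, one shows that no such internal arc exists, that each class $V(G''_j)$ meets $V(G'_i)$ in exactly one vertex, and hence that $\pi_i$ is in fact an isomorphism $G'_i\to Q$ of edge-labelled digraphs (and $\rho_j$ an isomorphism $G''_j\to P$). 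I expect this to be the main obstacle: it is the only place where the arc/label decomposition, the two contraction isomorphisms, acyclicity and connectedness all have to be used together, and it is the step that most needs to be carried out in full detail.

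Granting the grid property, define $\phi\colon V(G)\to V(P)\times V(Q)=V(P\Box Q)$ by $\phi(v)=(\tilde y_{i(v)},\tilde x_{j(v)})$; this is a bijection, since injectivity is exactly the grid property and both sides have $mn$ vertices. To check that $\phi$ is a label-preserving digraph isomorphism, take an arc $a$ of $G$ with $\mu(a)=(u,v)$. By the arc decomposition, $a$ lies in some $A(G'_i)$ or some $A(G''_j)$. If $a\in A(G'_i)$, then $u,v\in V(G'_i)$, so $\phi(u)$ and $\phi(v)$ agree in the first coordinate; the grid property gives $j(u)\ne j(v)$, and $\pi_i$ sends $a$ to an arc $\tilde x_{j(u)}\to\tilde x_{j(v)}$ of $Q$ with label $\lambda(a)$, whence $\phi$ sends $a$ to the arc $(\tilde y_i,\tilde x_{j(u)})\to(\tilde y_i,\tilde x_{j(v)})$ of $P\Box Q$ with label $\lambda(a)$; the case $a\in A(G''_j)$ is symmetric via $\rho_j$. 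Conversely, every arc of $P\Box Q$ is either a copy of an arc of $Q$ inside a fixed $\tilde y_i$-layer or a copy of an arc of $P$ inside a fixed $\tilde x_j$-layer, and since $\pi_i$ and $\rho_j$ are isomorphisms, each such arc is the $\phi$-image of exactly one arc of $G$, with matching tail, head and label. Therefore $\phi$ is an isomorphism of edge-labelled acyclic directed multigraphs, i.e.\ $G\cong P\Box Q=P\boxbackslash Q$, as claimed. Apart from the grid property, everything else is routine coordinate bookkeeping.
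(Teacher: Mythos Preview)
Your approach is essentially the same as the paper's: define the obvious map $\phi(v)=(\tilde y_{i(v)},\tilde x_{j(v)})$, argue it is a bijection, and then check arc-and-label preservation, having first noted that the absence of shared labels collapses $\boxbackslash$ to $\Box$. The paper organises this as two claims (injectivity and surjectivity of $\phi$) followed by an arc-preservation paragraph.

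The one noteworthy difference is in emphasis rather than method. What you call the \emph{grid property} is exactly the paper's Claim~1, and the paper dispatches it in three lines: if $u\neq v$ both lie in $V(G'_i)\cap V(G''_j)$, then the quotient map $V(G'_i)\to V(G_2)$ identifies them, so (since $|V(G'_i)|=n=|V(G_2)|$) it cannot be a bijection, contradicting $G_2\cong G'_i$. You may find this cardinality shortcut cleaner than the route via internal arcs, acyclicity and connectedness that you sketched; conversely, your more explicit two-sided arc check (using that each $\pi_i$ and $\rho_j$ is itself a labelled-digraph isomorphism) is more detailed than the paper's final paragraph, which appeals directly to the layer structure of the Cartesian product together with $G_1\cong G'_i$, $G_2\cong G''_j$.
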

\begin{proof}
Let $G_1=G/V(G'_1)/\ldots/V(G'_m)$ and $G_2=G/V(G''_1)/\ldots$ $/V(G''_n)$.
It suffices to define a mapping $\phi: V(G)\rightarrow V(G_1\boxbackslash G_2)$ and to prove that $\phi$ is an isomorphism from $G$ to $G_1\boxbackslash G_2$.
Let $\tilde{x}_i$ and $\tilde{y}_j$ be the new vertices replacing the sets of vertices $V(G''_i)$ and $V(G'_j)$ when defining $G_2$ and $G_1$, respectively.
Consider the mapping $\phi:V(G)\rightarrow V(G_1\boxbackslash G_2)$ defined by $\phi(u)=(\tilde{y}_j,\tilde{x}_i)$ where $u\in G''_i\cap G'_j$.
\begin{claim}\label{claim-inj}
$\phi$ is an injective map from $V(G)$ to $V(G_1\boxbackslash G_2)$.
\end{claim}
\begin{proof}
Because $V(G)=\overundercup{i=1}{m}V(G'_i)=\overundercup{j=1}{n}V(G''_j)$, we have that for each vertex $u\in V(G)$ there exist subgraphs $G'_i$ and $G''_j$ such that $u\in G'_i\cap G''_j$.
Such a vertex $u$ is uniquely defined. 
If there exist vertices $u,v\in G'_i\cap G''_j$ then because $G_1\cong G''_i$ and $G_2\cong G'_i$ we have $u=v$.
By contradiction, suppose $u\neq v$ then both $u$ and $v$ are replaced by $\tilde{x}_i$ in $G_2$ and by $\tilde{y}_j$ in $G_1$. But then, because $u,v\in G'_i$ it follows that $G_2\ncong G'_i$ and because $u,v\in G''_i$ it follows that $G_1\ncong G''_i$.
Now, the contraction of $G$ giving $G_1$ is an injective map from all $u\in G''_i$ to $\tilde{x}_i$, the contraction of $G$ giving $G_2$ is an injective map from all $u\in G'_j$ to $\tilde{y}_j$.
Then, by the definition of the Cartesian product, we have that $u$ is mapped on $(\tilde{y}_j,\tilde{x}_i)$ in $G_1\Box G_2$.
Because, all arcs $a$ of $G_1$ are not synchronizing arcs with respect to all arcs $b$ of $G_2$, there are no vertices removed from $G_1\boxtimes G_2$ and, therefore, $V(G_1\boxtimes G_2)=V(G_1\boxbackslash G_2)$.
Hence, the map $\phi$ is a injective map from $V(G)$ to $V(G_1\boxbackslash G_2)$.
\end{proof}
\begin{claim}\label{claim-sur}
$\phi$ is a surjective map from $V(G)$ to $V(G_1\boxbackslash G_2)$.
\end{claim}
\begin{proof}
Furthermore, because $\overundercup{i=1}{m} V(G'_i)=V(G)$, we have $V(G_1)=\{\tilde{y}_1,\ldots,\tilde{y}_m\}$ and, similarly, because $\overundercup{i=1}{n} V(G''_i)=V(G)$, we have $V(G_2)=\{\tilde{x}_1,\ldots,\tilde{x}_n\}$, giving $|V(G)|=|V(G_1\Box G_2)|=m\cdot n$. 
Because all arcs $a$ of $\overundercup{i=1}{m}G'_i$ are not synchronizing with respect to all arcs $b$ of $\overundercup{j=1}{n}G''_j$, due to the definition of the VRSP we have $|V(G)|=|V(G_1\Box G_2)|=|V(G_1\boxbackslash G_2)|=m\cdot n$.
Hence, we have $V(G_1\boxbackslash G_2)=\{(\tilde{y}_j,\tilde{x}_i)\mid i\in\{1,\ldots, m\},j\in \{1,\ldots, n\}$.
Therefore, there is a vertex $u\in V(G)$ for every vertex $(\tilde{y}_j,\tilde{x}_i) \in V(G_1\boxbackslash G_2)$ and the map $\phi$ is a surjective map from $V(G_1\boxbackslash G_2)$ to $V(G)$.
\end{proof}
Due to Claim~\ref{claim-inj} and Claim~\ref{claim-sur}, we have that $\phi$ is a bijection from $V(G)$ to $V(G_1\boxbackslash G_2)$.
Because the vertices $(\tilde{y}_j,\tilde{x}_i), i=1,\ldots, m, j=1,\ldots,n,$ are the only vertices in $G/G_1\boxbackslash G/G_2$, it remains to show that $\phi$ preserves the arcs and their labels. 
By definition of the Cartesian product we have that for each arc $a \in A(G_1)$ with $\mu(a)=(\tilde{y}_{j_{_1}},\tilde{y}_{j_{_2}})$, there is a set of arcs $\{a'\mid \mu(a')=((\tilde{y}_{j_{_1}},\tilde{x}_{i}),(\tilde{y}_{j_{_2}},\tilde{x}_{i})),\lambda(a)=\lambda(a'),\tilde{x}_i\in G_2\}\subseteq A(G_1\Box G_2)$.
Because this is true for all arcs $a$ of $G_1$ and all vertices $\tilde{x}_{i}$ of $G_2$, we have that each graph $G'_i$ induced by $(\tilde{y}_{j},\tilde{x}_{i}),\tilde{y}_{j}\in G_1$ is isomorphic to $G_1$, analogously, for all arcs $a$ of $G_2$ and all vertices $\tilde{y}_{j}$ of $G_1$, we have that each graph $G''_j$ induced by $(\tilde{y}_{j},\tilde{x}_{i}),\tilde{x}_{i}\in G_2$ is isomorphic to $G_2$.
Together with $G_1\cong G'_i$ and $G_2\cong G''_j$, we have that the map $\phi$ preserves the arcs and their labels and therefore $\phi$ is an isomorphism from $G$ to $G_1\boxbackslash G_2$.
Hence, $G\cong G_1\boxbackslash G_2$.
This completes the proof of Theorem~\ref{theorem1}.
\end{proof}

In the following two figures, we will show that all requirements of Theorem~\ref{theorem1} are necessary to come to the conclusion that $\phi$ is an isomorphism from $G$ to $G_1\boxbackslash G_2$ and therefore $G\cong G_1\boxbackslash G_2$.
In Figure~\ref{CartDecomposition1}, we show that $G=\overundercup{i=1}{m}G'_i\cup \overundercup{j=1}{n}G''_j$ and $V(G)=\overundercup{i=1}{m}V(G'_i)=\overundercup{j=1}{n}V(G''_j)$ must not be violated.
In Figure~\ref{CartDecomposition2}, we show that $G'_i\cong G'_j$, $G''_i\cong G''_j$, $G/V(G'_1)/\ldots/V(G'_m)\cong G''_i$ and $G/V(G''_1)/\ldots/V(G''_n)\cong G'_i$, must not be violated.
For $L(G'_i)\cap L(G''_j)=\emptyset$, it is obvious that the decomposition will fail if there are synchronizing arcs in $G_1$ with respect to $G_2$. 
Therefore, we do not show this in an example figure.

\begin{figure}[H]
\begin{center}
\resizebox{0.75\textwidth}{!}{
\begin{tikzpicture}[-{Stealth[length=5mm, width=4mm]},midway ,auto,node distance=2.5cm,
  main node/.style={circle,fill=blue!10,draw, font=\sffamily\Large\bfseries}]
  \tikzset{VertexStyle/.append style={
  font=\itshape\large, shape = circle,inner sep = 2pt, outer sep = 0pt,minimum size = 20 pt,draw}}
  \tikzset{LabelStyle/.append style={font = \itshape}}
  \SetVertexMath
  \def\x{0.0}
  \def\y{1.0}
  \def\dx{-11.0}
  \def\dy{-10.0}
\node at (\x+0.5+\dx,3+\y+0+\dy) {$G$};
\node at (\x+1.5+\dx,\y+1.4+\dy) {$G''_1$};
\node at (\x+4.5+\dx,\y+2.6+\dy) {$G'_1$};
\node at (\x+4.5+\dx,\y-0.4+\dy) {$G'_2$};
\node at (\x+4.5,\y-1-2) {$G/V(G''_1)$};
\node at (\x+0.5+1,\y-5-1) {$G/V(G'_1)/V(G'_2)$};
\node at (\x+8.5+1.5-6+3,\y-5.1-2-1) {$G/V(G'_1)\boxbackslash G/V(G''_1)/V(G''_2)$};
  \def\x{0.5}
  \def\y{6.0+1}

  \Vertex[x=\x+1.5+\dx, y=\y-3.0+\dy,L={u_2}]{u_3}
  \Vertex[x=\x+1.5+\dx, y=\y-6.0+\dy,L={u_3}]{u_4}
  \Vertex[x=\x+5.5+\dx, y=\y-3+\dy,L={u_5}]{u_6}
  \Vertex[x=\x+5.5+\dx, y=\y-6+\dy,L={u_6}]{u_7}

  \Edge[label = a](u_3)(u_4) 
  \Edge[label = a](u_6)(u_7) 
  \Edge[label = b](u_3)(u_6) 
  \Edge[label = b](u_4)(u_7) 

  \Edge(u_3)(u_4) 
  \Edge(u_6)(u_7) 
  \Edge(u_3)(u_6) 
  \Edge(u_4)(u_7) 

  \def\x{4.0}
  \def\y{-3}
  \Vertex[x=\x+1, y=\y+0.0,L={\tilde{x}_1}]{s_1}
  \Vertex[x=\x+7, y=\y+0.0,L={u_5}]{s_3}
  \Vertex[x=\x+4, y=\y+0.0,L={u_6}]{s_2}
  
  \Edge[label = b](s_1)(s_2) 
  \Edge[label = b,style={bend left=45,min distance=1cm}](s_1)(s_3) 
  \Edge[label = a](s_3)(s_2) 

  \Edge(s_1)(s_2) 
    \Edge[style={bend left=45,min distance=1cm}](s_1)(s_3) 
  \Edge(s_3)(s_2) 

  \def\x{+1.5}
  \def\y{-3.0}
  \Vertex[x=\x+0, y=\y-3.0,L={\tilde{y}_1}]{t_1}
  \Vertex[x=\x+0, y=\y-6.0,L={\tilde{y}_2}]{t_2}

  \Edge[label = a](t_1)(t_2) 
  \Edge(t_1)(t_2)

\tikzset{VertexStyle/.append style={
  font=\itshape\large,shape = rounded rectangle,inner sep = 0pt, outer sep = 0pt,minimum size = 20 pt,draw}}

  \def\x{2.0}
  \def\y{-7.0}
  \def\x{2.0}
  \def\y{-6.0}
  \Vertex[x=\x+3.0, y=\y-0.0,L={(\tilde{y}_1,\tilde{x}_1)}]{t_1s_1}
  \Vertex[x=\x+6.0, y=\y-0.0,L={(\tilde{y}_1,u_6)}]{t_1s_2}
  \Vertex[x=\x+9.0, y=\y-0.0,L={(\tilde{y}_1,u_5)}]{t_1s_3}

  \def\x{2.0}
  \def\y{-9.0}
  \Vertex[x=\x+6.0, y=\y-0.0,L={(\tilde{y}_2,u_6)}]{t_2s_2}

    \def\x{2.0}
  \def\y{-19.0}

  \Edge[label = b](t_1s_1)(t_1s_2) 
  \Edge[label = a](t_1s_3)(t_2s_2) 
  \Edge[label = b,style={bend left=45,min distance=1cm}](t_1s_1)(t_1s_3) 
  
  \Edge(t_1s_1)(t_1s_2) 
    \Edge[style={bend left=45,min distance=1cm}](t_1s_1)(t_1s_3)

  \def\x{1.7+\dx}
  \def\y{5.9-1.8+\dy}
\draw[circle, -,dashed, very thick,rounded corners=18pt] (\x-0.5,\y+0.0)--(\x-0.5,\y+0.7) --(\x+1.0,\y+0.7) -- (\x+1.0,\y-3.8) -- (\x-0.5,\y-3.8) --  (\x-0.5,\y+0.0);

  \def\x{1.7+\dx}
  \def\y{2.3+1+\dy}
\draw[circle, -,dashed, very thick,rounded corners=18pt] (\x+2.8,\y+0.0)--(\x-0.5,\y+0.0)--(\x-0.5,\y+1.5)--(\x+5.0,\y+1.5) --(\x+5.0,\y) -- (\x+2.8,\y+0.0);

  \def\x{1.7+\dx}
  \def\y{-0.7+1+\dy}
\draw[circle, -,dashed, very thick,rounded corners=18pt] (\x+2.8,\y+0.0)--(\x-0.5,\y+0.0)--(\x-0.5,\y+1.5)--(\x+5.0,\y+1.5) --(\x+5.0,\y) -- (\x+2.8,\y+0.0);

\end{tikzpicture}
}
\end{center}
\caption{Decomposition of $G$ for which $G\neq \bigcup\limits_{i=1}^{2}G'_i\cup \bigcup\limits_{j=1}^{1}G''_j$ and $V(G)=\bigcup\limits_{i=1}^{2}V(G'_i)\neq\bigcup\limits_{j=1}^{1}V(G''_j)$ and, therefore, $G\ncong G/G'_1/G'_2\boxbackslash G/G''_1$.}
  \label{CartDecomposition1}
\end{figure}

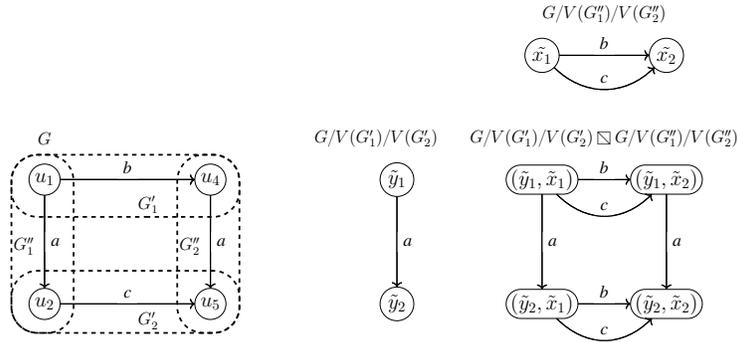
\begin{figure}[H]
\begin{center}
\resizebox{0.6\textwidth}{!}{
\begin{tikzpicture}[-{Stealth[length=5mm, width=4mm]},auto,node distance=2.5cm,decoration={
    markings,
    mark=at position 0.5 with {\arrow{>}}},
  main node/.style={circle,fill=blue!10,draw, font=\sffamily\Large\bfseries}]
  \tikzset{VertexStyle/.append style={
  font=\itshape\large, shape = circle,inner sep = 2pt, outer sep = 0pt,minimum size = 20 pt,draw}}
  \tikzset{LabelStyle/.append style={font = \itshape}}
  \SetVertexMath
  \def\x{0.0}
  \def\y{1.0}
  \def\dx{-9.0}
  \def\dy{-9.0}
\node at (\x+2.0+\dx,3+\y+\dy) {$G$};
\node at (\x+4.5+\dx,\y+5.4-4+\dy) {$G'_1$};
\node at (\x+1.5+\dx,\y+4.4-4+\dy) {$G''_1$};
\node at (\x+4.5+\dx,\y+2.6-4+\dy) {$G'_2$};
\node at (\x+5.5+\dx,\y+4.4-4+\dy) {$G''_2$};
\node at (\x+6.5,\y-1-2) {$G/V(G''_1)/V(G''_2)$};
\node at (\x+1,\y-5-1) {$G/V(G'_1)/V(G'_2)$};
\node at (\x+3.5+3,\y-3-2-1) {$G/V(G'_1)/V(G'_2)\boxbackslash G/V(G''_1)/V(G''_2)$};
  \def\x{0.5}
  \def\y{6.0+1-4}

  \Vertex[x=\x+1.5+\dx, y=\y+0.0+\dy,L={u_1}]{u_2}
  \Vertex[x=\x+1.5+\dx, y=\y-3.0+\dy,L={u_2}]{u_3}
  \Vertex[x=\x+5.5+\dx, y=\y+0.0+\dy,L={u_4}]{u_5}
  \Vertex[x=\x+5.5+\dx, y=\y-3+\dy,L={u_5}]{u_6}

  \Edge[label = a](u_2)(u_3) 
  \Edge[label = a](u_5)(u_6) 
  \Edge[label = b](u_2)(u_5) 
  \Edge(u_2)(u_5) 
  \Edge[label = c](u_3)(u_6) 

  \Edge(u_2)(u_3) 
  \Edge(u_5)(u_6) 
  \Edge(u_2)(u_5) 
  \Edge(u_3)(u_6) 

  \def\x{4.0}
  \def\y{-3}
  \Vertex[x=\x+1, y=\y+0.0,L={\tilde{x_1}}]{s_1}
  \Vertex[x=\x+4, y=\y+0.0,L={\tilde{x_2}}]{s_2}
  
  \Edge[label = b](s_1)(s_2) 
  \Edge(s_1)(s_2) 
  \Edge[label = c,style={bend left=-45,min distance=1cm}](s_1)(s_2) 
    \Edge[style={bend left=-45,min distance=1cm}](s_1)(s_2)

  \def\x{+1.5}
  \def\y{-3.0}
  \Vertex[x=\x+0, y=\y-3.0,L={\tilde{y}_1}]{t_1}
  \Vertex[x=\x+0, y=\y-6.0,L={\tilde{y}_2}]{t_2}

  \Edge[label = a](t_1)(t_2) 
  \Edge(t_1)(t_2)

\tikzset{VertexStyle/.append style={
  font=\itshape\large,shape = rounded rectangle,inner sep = 0pt, outer sep = 0pt,minimum size = 20 pt,draw}}

  \def\x{2.0}
  \def\y{-6.0}
  \Vertex[x=\x+3.0, y=\y-0.0,L={(\tilde{y}_1,\tilde{x}_1)}]{t_1s_1}
  \Vertex[x=\x+6.0, y=\y-0.0,L={(\tilde{y}_1,\tilde{x}_2)}]{t_1s_2}

  \def\x{2.0}
  \def\y{-9.0}
  \Vertex[x=\x+3.0, y=\y-0.0,L={(\tilde{y}_2,\tilde{x}_1)}]{t_2s_1}
  \Vertex[x=\x+6.0, y=\y-0.0,L={(\tilde{y}_2,\tilde{x}_2)}]{t_2s_2}

    \def\x{2.0}
  \def\y{-19.0}

  \Edge[label = b](t_1s_1)(t_1s_2) 
  \Edge[label = c,style={bend left=-45,min distance=1cm}](t_1s_1)(t_1s_2)

  \Edge(t_1s_1)(t_1s_2) 
  \Edge[style={bend left=-45,min distance=1cm}](t_1s_1)(t_1s_2) 

  \Edge[label = b](t_2s_1)(t_2s_2) 
  \Edge[label = c,style={bend left=-45,min distance=1cm}](t_2s_1)(t_2s_2) 
  \Edge(t_2s_1)(t_2s_2) 
  \Edge[style={bend left=-45,min distance=1cm}](t_2s_1)(t_2s_2)

  \Edge[label = a](t_1s_1)(t_2s_1) 
  \Edge[label = a](t_1s_2)(t_2s_2) 
  \Edge(t_1s_1)(t_2s_1) 
  \Edge(t_1s_2)(t_2s_2) 
 
  \def\x{1.7+\dx}
  \def\y{5.9+1-4+\dy}
\draw[circle, -,dashed, very thick,rounded corners=18pt] (\x-0.5,\y+0.0)--(\x-0.5,\y+0.7) --(\x+1.0,\y+0.7) -- (\x+1.0,\y-3.6) -- (\x-0.5,\y-3.6) --  (\x-0.5,\y+0.0);
  \def\x{5.7+\dx}
  \def\y{5.9+1-4+\dy}
\draw[circle, -,dashed, very thick,rounded corners=18pt] (\x-0.5,\y+0.0)--(\x-0.5,\y+0.7) --(\x+1.0,\y+0.7) -- (\x+1.0,\y-3.6) -- (\x-0.5,\y-3.6) --  (\x-0.5,\y+0.0);
  \def\x{9.7}
  \def\y{5.9+1}
  
  \def\x{1.7+\dx}
  \def\y{5.1+1-4+\dy}
\draw[circle, -,dashed, very thick,rounded corners=18pt] (\x+2.8,\y+0.0)--(\x-0.5,\y+0.0)--(\x-0.5,\y+1.5)--(\x+5.0,\y+1.5) --(\x+5.0,\y) -- (\x+2.8,\y+0.0);

  \def\x{1.7+\dx}
  \def\y{2.3+1-4+\dy}
\draw[circle, -,dashed, very thick,rounded corners=18pt] (\x+2.8,\y+0.0)--(\x-0.5,\y+0.0)--(\x-0.5,\y+1.5)--(\x+5.0,\y+1.5) --(\x+5.0,\y) -- (\x+2.8,\y+0.0);

\end{tikzpicture}
}
\end{center}
\caption{Decomposition of $G$ for which $G'_i\ncong G'_j$ and $G/V(G''_1)/\ldots/V(G''_n)\cong G'_i$.}
  \label{CartDecomposition2}
\end{figure}


\end{document}